\documentclass{article}
\usepackage{amsfonts,amsmath,amsthm,amssymb}
\usepackage{graphics, epsfig}
\usepackage{color}
\usepackage{appendix}
\usepackage{ulem}
\usepackage[makeroom]{cancel}
\usepackage{fancyhdr}
\usepackage{centernot}
\usepackage{mathtools}
\usepackage{ stmaryrd }

 \usepackage[usenames,dvipsnames]{pstricks}
 \usepackage{pst-grad} 
 \usepackage{pst-plot} 
\allowdisplaybreaks

\let\TeXchi\chi
\newbox\chibox
\setbox0 \hbox{\mathsurround0pt $\TeXchi$}
\setbox\chibox \hbox{\raise\dp0 \box 0 }
\def\chi{\copy\chibox}


\newtheorem{proposition}{Proposition}[section]
\newtheorem{theorem}{Theorem}[section]
\newtheorem{definition}{Definition}[section]
\newtheorem{example}{Example}[section]

\newtheorem{remark}{Remark}[section]

\newtheorem{question}{Question}[section]

\numberwithin{equation}{section}
\numberwithin{theorem}{section}
\numberwithin{definition}{section}
\numberwithin{example}{section}
\numberwithin{proposition}{section}
\numberwithin{lemma}{section}
\numberwithin{remark}{section}
\setcounter{secnumdepth}{3}

\DeclareMathOperator{\id}{id}

\DeclareMathOperator{\Spec}{Spec}
\DeclareMathOperator{\Spa}{Spa}
\newcommand\blfootnote[1]{%
  \begingroup
  \renewcommand\thefootnote{}\footnote{#1}%
  \addtocounter{footnote}{-1}%
  \endgroup
}
\pagestyle{fancy}
\fancyhf[HL]{Manuel Norman}
\fancyhf[HR]{Conical calculus on schemes and perf. spaces via stratif.}
\begin{document}
\title{Conical calculus on schemes and perfectoid spaces via stratification}
\author
{Manuel Norman}
\date{}
\maketitle
\begin{abstract}
\noindent In this paper we show that, besides the usual calculus involving K\"ahler differentials, it is also possible to define conical calculus on schemes and perfectoid spaces; this can be done via a stratification process. Following some ideas from [1-2], we consider some natural stratifications of these spaces and then we build upon the work of Ayala, Francis, and Tanaka [3] (see also [4-5] and [18]); using their definitions of derivatives, smoothness and vector fields for stratified spaces, and thanks to some particular methods, we are able to transport these concepts to schemes and perfectoid spaces. This also allows us to define conical differential forms and the conical de Rham complex. At the end, we compare this approach with the usual one, noting that it is a useful \textit{addition} to K\"ahler method.
\end{abstract}
\blfootnote{Author: \textbf{Manuel Norman}; email: manuel.norman02@gmail.com\\
\textbf{AMS Subject Classification (2010)}: 57N80, 57R35, 14A15\\
\textbf{Key Words}: scheme, perfectoid space, calculus}
\section{Introduction}
The concept of scheme was introduced by Grothendieck in his well known treatise EGA (see, for instance, [6]). A scheme is a locally ringed space which can be covered by affine schemes, that is, by locally ringed spaces which are isomorphic to the spectrum of some ring (the spectrum can be turned into a locally ringed space using Zariski topology and a certain structure sheaf; see [7-8] for more details). More recently, Scholze defined in [9] the concept of perfectoid space, which is similar, from some points of view, to the notion of scheme. The idea is to assign to any perfectoid affinoid $K$-algebra $(R,R^+)$ a certain affinoid adic space, namely $\Spa(R,R^+)$, which is called 'affinoid perfectoid space'; then, we define a perfectoid space to be an adic space over the perfectoid field $K$ which is locally isomorphic to some affinoid perfectoid spaces. For more details, we refer to [9-12] and [19]. These two notions are defined in such a way that they "locally resemble" some kind of space: schemes locally resemble affine schemes, while perfectoid spaces locally resemble affinoid perfectoid spaces. Another well known concept (that in fact can be define via ringed spaces, as schemes) is the one of manifold, where we are used to consider differentiation and integration, as for $\mathbb{R}^n$ \footnote{In fact, the motivation of this paper is to show that also conical calculus (different from the "usual" one with K\"ahler differentials) can be considered on schemes and perfectoid spaces.} (which indeed is what a manifold locally looks like). Many generalisations of manifolds arise in a similar way. Another idea of this type was introduced by tha author in [1] (and then developed in other papers): a structured space locally resembles various kinds of algebraic structures. The theory of structured spaces is not necessary to read this paper; however, we will follow some ideas from [1-2] (which are entirely reported here) and we will apply them in order to obtain a stratification of schemes and perfectoid spaces, that is, we will show that there is a natural way to associate to these spaces a certain poset, which will then give us a poset-stratified space (see Definition 2.1.3 and Remark 2.1.9 in [3]). Then, building upon [3], we will define derivatives over these stratifications, and this will allow us to extend the notion to schemes and perfectoid spaces (actually, the same method can be applied to any kind of space which locally resembles other spaces).
\section{Stratification of schemes and perfectoid spaces}
We begin showing how we can stratify schemes and perfectoid spaces. Actually, a similar process can be applied to any notion of space which "locally resembles" some other space. Let $X$ be a scheme or perfectoid space, and consider some open \footnote{If not otherwise specified, when we say 'open covering' we always refer to an open covering w.r.t. the topology defined on the scheme or perfectoid space, and not to other topologies that will be defined later (this is why we will prefer to consider two topologies; see Remark \ref{Rm:2.1}).} covering $( X_p )_p$ by affine schemes or affinoid perfectoid spaces, that is, a collection of open affine schemes or affinoid perfectoid spaces such that $\bigcup_p X_p = X$. We define a map $h:X \rightarrow \mathcal{L}$ as in Section 4 of [1], that is, we define:
\begin{equation}\label{Eq:2.1}
h(x):= \lbrace X_t \in (X_p)_p : x \in X_t \rbrace
\end{equation}
Intuitively, this map measures "how dense" is a point belonging to the underlying set of a scheme or perfectoid space w.r.t. the chosen cover (the dependence on this cover may be removed in some cases; see Section 2.2). The collection $\mathcal{L}$ may be defined, as in [1], to be the "power collection" of $(X_p)_p$ without the empty sets, that is, the analogue of the power set, but with collection of sets, where we exclude the empty sets. Now the idea is to define, as in Section 4 of [1], the following preorder on $X$:
\begin{equation}\label{Eq:2.2}
x \leq y \Leftrightarrow h(x) \subseteq h(y)
\end{equation}
It is immediate to check that this is indeed a preorder, but it may not be a partial order. We define the abstract subset $X/ \sim$ of $X$, which clearly becomes a poset under the above $\leq$, as the quotient of $X$ by the following equivalence relation:
\begin{equation}\label{Eq:2.3}
x \sim y \Leftrightarrow h(x)=h(y)
\end{equation}
Now we generalise an idea in [2]: we consider a particular kind of poset-stratified spaces, namely the ones obtained using a scheme or a perfectoid space $X$ and the corresponding poset $X/ \sim$. More precisely, we recall (see, for instance, [13-16]) that a (poset-)stratified space is a structure $(X,X \xrightarrow{s} P)$, with $X$ topological space, $P$ poset endowed with the Alexandroff topology, and $s:X \rightarrow P$ continuous surjection (there are some other slightly different notions, but here we will consider this one). We can then define:
\begin{definition}\label{Def:2.1}
Let $X$ be a scheme or a perfectoid space. Define the poset $X/ \sim$ via the equations \eqref{Def:2.1}, \eqref{Eq:2.2} and \eqref{Eq:2.3}, and endow it with the Alexandroff topology. A structure $(X,X \xrightarrow{s} X/ \sim)$, where $s:X \rightarrow X/ \sim$ is a continuous surjection, and the underlying set $X$ is also endowed with a second chosen topology (e.g. the smallest topology generated by the collection $(X_p)_p \cup \lbrace X, \emptyset \rbrace$), is called a 'stratification of the scheme/perfectoid space'.
\end{definition}
\begin{remark}\label{Rm:2.1}
\normalfont The above definition needs some remarks. First of all, the smallest topology considered there is essentialy the same as the one in Example 1.1 in [1] (this will allow us to follow an analoguous proof to the one in [2], which will give us an interesting example of stratification). Moreover, we can either decide to drop the previous topology of $X$ or to endow $X$ with a second topology, turning it into a bitopological space (see [17]). In the latter case, we will always refer (when dealing with $X/ \sim$ and with $s$) to the second topology, e.g. the smallest one defined above, while we will always refer to the former when considering the open coverings.\\
Another important aspect to notice is that a stratification of a scheme (or perfectoid space) is indeed a particular case of stratified space, as it is clear by its definition.
\end{remark}
As announced in the above Remark, we now give an explicit example of stratification, which will be regarded as the 'standard one'. The proof of this example is almost the same as the one of Proposition 5.1 in [2]; we rewrite it here.
\begin{example}\label{Ex:2.1}
\normalfont Let $X$ be any scheme of perfectoid space, and consider some covering $(X_p)_p$. Define the map $s:X \rightarrow X/ \sim$ by $s(x):=[x]$, and suppose that the covering is such that $\sup_{x \in X} |h(x)| < \infty$ \footnote{Here, we define the cardinality in the same way as for sets. For instance, let $A$, $B$ and $C$ be three different (i.e. they are not equal to each other) sets, and let $\mathcal{C}$ be the collection containing these sets. Then, $|\mathcal{C}|=3$.}. If we let the second topology be the smallest one generated by the covering (see the example in Definition \ref{Def:2.1}), then $(X, X \xrightarrow{s} X/ \sim)$ is a stratification of the scheme (or perfectoid space). To prove this, we need to show that $s$ is continuous and surjective. Surjectivity is clear, since each $[x] \in X/ \sim$ is reached at least by $x$. To prove continuity, first notice that, by definition of Alexandroff topology, whenever $U$ is an open subset of $X/ \sim$ and whenever $t \in U$, $r \in X/ \sim$, we have $r \in U$. Now, consider $s^{-1}([x])$. This is equal to the set:
$$ s^{-1}([x])=\lbrace y \in X: h(y)=h(x) \rbrace $$
If we have, generally:
$$ h(x)=\lbrace X_t \rbrace_{\text{some} \, X_t \text{'s in} \, (X_p)_p} $$
then we know that all the $x \in s^{-1}([x])$ belong to $\bigcap_{X_t} X_t$, with the same $X_t$'s as before. Actually, we can even be more precise: we can delete the other $X_i$'s in $(X_p)_p$ that intersect $\bigcap_{X_t} X_t$ but do not belong to the collection of $X_t$'s above. This means that:
$$ s^{-1}([x]) = (\bigcap_{X_t} X_t) \setminus (\bigcup_{\substack{ X_i \, \text{not belonging} \\ \text{to the previous} \\ \text{collection of} \, X_t \text{'s}}} X_i) $$
If $U$ is an open subset of $X/ \sim$, by what we said above we have that, whenever $x \in U$, all the points $y \in X/ \sim$ such that $x \leq y$ belong to $U$. This implies, by definition of $\leq$, that the set $s^{-1}(U)$ is the union of some intersections of sets in $(X_p)_p$: indeed, the gaps due to the differences of sets are filled because $h(x) \subseteq h(y)$ and by the previous discussion. Consequently, $s^{-1}(U)$ can be written as a union of some intersections as the above one. But by assumption we have intersections of a finite number of elements $X_t$, which are open by definition of the (second) topology on $X$. Thus, these intersections are open, and the union of these open sets is open. This proves the continuity of $s$, and thus we have verified that $(X, X \xrightarrow{s} X/ \sim)$ is a stratification of the scheme (or perfectoid space) $X$.
\end{example}
Clearly, stratifications can be used to study schemes and perfectoid spaces from other perspectives. We only sketch some of the possible results, even though they are not needed in the rest of this paper. Section 2.2 contains instead an important and useful "refinement": using direct limits (whenever possible), we will avoid the dependence on the covering $(X_p)_p$.
\subsection{Some results on stratifications}
Consider any collection of objects taken from the category of schemes (or from the category of perfectoid spaces over some fixed $K$), and assign to each element of the family one and only one stratification $(X, X \xrightarrow{s} X/ \sim)$ (we maintain them fixed throughout the discussion). Then, consider the class of all these stratifications; they will be the objects in a new category, where the morphisms are all the usual ones between stratified spaces (not anymore the ones between schemes or perfectoid spaces). Notice that there is a bijection between the chosen collection and the new category, because each stratification also involves the scheme or perfectoid space itself.\\
This category is actually a full subcategory of \textbf{Strat} (see Section 4.2 in 14]), as it can be easily verified. Consequently, some results for \textbf{Strat} still hold also on this subcategory. In particular, it can be seen that the following result holds:
\begin{proposition}\label{Prop:2.1}
Every subcategory of \textbf{Strat} constructed as above and equipped with the class of weak equivalences is a homotopical category.
\end{proposition}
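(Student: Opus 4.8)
The plan is to reduce this to the abstract fact that a subcategory of a homotopical category, equipped with the inherited class of weak equivalences, is again homotopical. Recall that a homotopical category is a category $\mathcal{C}$ together with a class $\mathcal{W}$ of morphisms (the weak equivalences) such that every identity morphism lies in $\mathcal{W}$ and $\mathcal{W}$ satisfies the two-out-of-six property: for any three composable morphisms $f,g,h$ with $gf \in \mathcal{W}$ and $hg \in \mathcal{W}$, all four of $f,g,h,hgf$ lie in $\mathcal{W}$. Since $\mathbf{Strat}$ is a homotopical category (Section 4.2 of [14]), I would write $\mathcal{W}$ for its class of weak equivalences and equip our subcategory $\mathcal{D}$ (the one constructed above) with the class $\mathcal{W}_{\mathcal{D}} := \mathcal{W} \cap \mathrm{Mor}(\mathcal{D})$, i.e.\ those morphisms of $\mathcal{D}$ which are weak equivalences in $\mathbf{Strat}$; this is exactly the meaning of "equipped with the class of weak equivalences" in the statement.

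First I would verify the identity condition. Let $(X, X \xrightarrow{s} X/\sim)$ be any object of $\mathcal{D}$. Because $\mathcal{D}$ is a subcategory, $\mathrm{id}_X \in \mathrm{Mor}(\mathcal{D})$; because $\mathbf{Strat}$ is homotopical, $\mathrm{id}_X \in \mathcal{W}$. Hence $\mathrm{id}_X \in \mathcal{W}_{\mathcal{D}}$, so all identities of $\mathcal{D}$ are weak equivalences.

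Next I would verify the two-out-of-six property inside $\mathcal{D}$. Suppose $f,g,h$ are composable morphisms of $\mathcal{D}$ with $gf \in \mathcal{W}_{\mathcal{D}}$ and $hg \in \mathcal{W}_{\mathcal{D}}$. Since $\mathcal{D}$ is closed under composition, the composites $gf$, $hg$ and $hgf$ are again morphisms of $\mathcal{D}$; moreover, by definition of $\mathcal{W}_{\mathcal{D}}$, we have $gf, hg \in \mathcal{W}$. Applying the two-out-of-six property of $\mathbf{Strat}$ yields $f,g,h,hgf \in \mathcal{W}$. Combining this with the fact that $f,g,h$ are morphisms of $\mathcal{D}$ by hypothesis and $hgf \in \mathrm{Mor}(\mathcal{D})$ by closure, we conclude $f,g,h,hgf \in \mathcal{W}_{\mathcal{D}}$, as required.

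The only point that needs care — and which I regard as the main (though modest) obstacle — is the bookkeeping that guarantees every morphism produced by the two-out-of-six property of $\mathbf{Strat}$ still lies in $\mathcal{D}$, and not merely in $\mathbf{Strat}$. This is precisely where the subcategory axioms are used: closure under composition ensures $hgf \in \mathrm{Mor}(\mathcal{D})$, while $f,g,h$ are in $\mathcal{D}$ by hypothesis. Fullness of $\mathcal{D}$, established in the preceding discussion, is not strictly needed for this argument, but it guarantees that no weak equivalence of $\mathbf{Strat}$ between objects of $\mathcal{D}$ is inadvertently excluded, so that $\mathcal{W}_{\mathcal{D}}$ is as large as it should be. Once this is observed, $(\mathcal{D}, \mathcal{W}_{\mathcal{D}})$ satisfies both defining axioms of a homotopical category, which completes the proof.
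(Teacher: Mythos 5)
Your proof is correct, but it takes a different route from the paper, whose entire proof is a one-line citation: ``See Lemma 4.3.7 in [14]'' --- i.e.\ it invokes the fact that \textbf{Strat} with its stratified weak equivalences is a homotopical category and implicitly transfers this to the subcategory, relying on the earlier remark that ``some results for \textbf{Strat} still hold also on this subcategory.'' What you do instead is make that transfer explicit: you isolate the abstract fact that any subcategory $\mathcal{D}$ of a homotopical category, equipped with the inherited class $\mathcal{W}_{\mathcal{D}} = \mathcal{W} \cap \mathrm{Mor}(\mathcal{D})$, is again homotopical, and you verify both axioms (identities and two-out-of-six) using only closure of $\mathcal{D}$ under identities and composition. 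This is arguably the more honest argument, since the inheritance step is exactly what the paper leaves unargued; it is also slightly more general, as your observation that fullness is not needed shows (the paper's construction does produce a full subcategory, but your proof would apply to any subcategory). What the paper's approach buys is brevity and a direct anchor to the source [14]; what yours buys is self-containedness --- a reader need only know that \textbf{Strat} is homotopical, not the precise statement or proof of Lemma 4.3.7, to see why the subcategory is too. The two are compatible: your verification is precisely the bridge between the cited lemma and the stated proposition.
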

\begin{proof}
See Lemma 4.3.7 in [14].
\end{proof}
Thus, it is possible to construct a homotopical category starting from any collection of schemes or perfectoid spaces over the same $K$, in such a way that there is a bijection between its objects and the elements in the chosen family.\\
Another possible kind of stratification can be obtained slightly changing Definition \ref{Def:2.1}. First of all, here we will endow in some way (below, two possible methods are shown) the space $X$ with a partial order. Then (after endowing it with the Alexadroff topology), we will consider the poset $X$ instead of $X/ \sim$ (this is the slight generalisation of Definition \ref{Def:2.1}, which leads to other possible kinds of stratifications of schemes and perfectoid spaces; clearly, such a structure is still a stratified space). More precisely, we know that $X$ is a preordered set under $\leq$. It is possible to define a partial order $\preceq$ on $X$ in the following standard way:
$$ x \prec y \Leftrightarrow x \leq y \, \text{and not} \, y \leq x $$
$$ x \preceq y \Leftrightarrow x \prec y \, \text{or} \, x=y $$
Another possible method to obtain a preorder is to start with some stratification $(X, X \xrightarrow{s} X/ \sim)$ and to define the following preorder (see Construction 4.2.3 in [14]):
$$ x \leq_s y \Leftrightarrow sx \leq sy $$
which can be turned into a partial order as in the previous situation. In any case, we obtain some poset structure for $X$. Now consider the stratified space $(X, X \xrightarrow{i} X)$ (where $i$ denotes the identity map). Then we have the following result:
\begin{proposition}\label{Prop:2.3}
Let $X$ be a scheme or a perfectoid space, and consider the stratified space $(X, X \xrightarrow{i} X)$, where $X$ is endowed with a poset structure either via the partial order obtained from $\leq$ or via the partial order obtained from $\leq_s$ (for some other stratification of $X$ related to $s$). Then, the following three results hold:\\
(i) $(X, X \xrightarrow{i} X)$ is a fibrant stratified space;\\
(ii) the stratified geometric realisation of the nerve of $(X, X \xrightarrow{i} X)$ is a fibrant stratified space;\\
(ii) $(X, X \xrightarrow{i} X)$ and the stratified geometric realisation of its nerve are homotopically stratified spaces.
\end{proposition}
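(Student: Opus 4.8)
The plan is to reduce all three assertions to general results of [14], once we have checked that $(X, X \xrightarrow{i} X)$ really is a legitimate stratified space of the required form. The first task is therefore to pin down the topology carried by the source copy of $X$. If we keep the cover-generated second topology of Definition \ref{Def:2.1} on the source, then continuity of $i$ demands that every Alexandroff-open subset of the poset $(X,\preceq)$ (resp. $(X,\preceq_s)$) be open in that second topology; but this fails precisely when two distinct points $a\neq b$ satisfy $h(a)=h(b)$, since such points are $\preceq$-incomparable and the up-set $\{y:a\preceq y\}$ separates $a$ from $b$, whereas every basic open of the second topology (a finite intersection of members of $(X_p)_p$) contains $a$ if and only if it contains $b$. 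The clean remedy, and the reading suggested by the phrase ``after endowing it with the Alexandroff topology,'' is to give the source copy of $X$ the Alexandroff topology of its own poset structure. With this choice $i=\id$ is a homeomorphism, hence a continuous surjection, and $(X, X \xrightarrow{i} X)$ is the tautological self-stratification of an Alexandroff space over its specialisation poset --- a legitimate object of \textbf{Strat} of the form to which the results of [14] apply.

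For part (i) I would then invoke the characterisation of fibrant objects in \textbf{Strat}. A poset, equipped with its Alexandroff topology and stratified over itself by the identity, has singleton strata and satisfies the defining right-lifting (homotopy-link) conditions for fibrancy tautologically, because the links are read off directly from the order relation. I would quote the relevant statement from [14] --- in the neighbourhood of the Lemma 4.3.7 already used for Proposition \ref{Prop:2.1} --- and merely verify that $(X,\preceq)$ and $(X,\preceq_s)$ meet its hypotheses.

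Part (ii) I would handle through the stratified realisation functor of [14]: the stratified geometric realisation of the nerve of our poset is the realisation of a stratified simplicial set, and the relevant theorem there asserts that this realisation is fibrant. This is the stratified incarnation of the classical fact that the order complex of a poset is a well-behaved model of its classifying space; the input is the nerve of the object produced in (i), so the hypotheses of the realisation theorem are automatically met. Finally, for part (iii) I would use the comparison, also established in [14], between fibrancy in this model structure and Quinn's notion of a homotopically stratified space: fibrant stratified spaces --- and realisations of nerves of posets in particular --- satisfy the homotopy-link tameness conditions, so both $(X, X \xrightarrow{i} X)$ and the realisation of its nerve are homotopically stratified.

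The main obstacle in the whole argument is the very first step: fixing the source topology so that $i$ is continuous, since, as noted above, the naive choice of the cover-generated topology breaks continuity exactly in the presence of nontrivial $\sim$-classes. Once the source is given the Alexandroff topology this difficulty disappears, and everything after that is a matter of matching our two posets $(X,\preceq)$ and $(X,\preceq_s)$ to the hypotheses of the quoted results of [14].
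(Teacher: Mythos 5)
Your proposal is correct and follows essentially the same route as the paper: the paper's proof consists precisely of citing [14] --- Example 4.3.4 for (i), Example 4.3.5 for (ii), and Theorem 4.3.29 applied to these for (iii) --- which are exactly the three results you invoke functionally (fibrancy of a poset self-stratified by the identity, fibrancy of the stratified realisation of its nerve, and the implication from fibrancy to being homotopically stratified). Your preliminary observation that the source copy of $X$ must carry the Alexandroff topology --- since the identity fails to be continuous out of the cover-generated topology whenever some $\sim$-class is nontrivial --- is a correct and worthwhile clarification of a point the paper passes over with the unargued phrase ``clearly, such a structure is still a stratified space.''
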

\begin{proof}
(i) is Example 4.3.4 in [14], while (ii) is Example 4.3.5 in the same paper. (iii) is obtained applying Theorem 4.3.29 in [14] to the previous two results.
\end{proof}
It is also possible to associate to schemes and perfectoid spaces other kinds of spaces, as shown below.
\begin{proposition}\label{Prop:2.4}
Let $X$ be a scheme or a perfectoid space and consider a stratification $(X, X \xrightarrow{s} X/ \sim)$ (or even the more general kind of stratification $(X, X \xrightarrow{i} X)$). Then, we can define the following spaces:\\
(i) the simplicial set $SS(X)$, whose $n$-simplices are given by \textbf{Strat}$(\parallel \Delta^n \parallel, X)$;\\
(ii) the prestream $(X, \leq|_{\bullet})$ (or $(X, \leq_s|_{\bullet}$);\\
(iii) the d-space $(X, d^{\leq|_{\bullet}} X)$ (or $(X, d^{\leq_s|_{\bullet}} X)$).
\end{proposition}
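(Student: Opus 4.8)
The plan is to treat each of the three items as an application of a standard construction from directed and stratified homotopy theory to the data produced in Section 2, so that the only real work is checking that the hypotheses of those constructions are met by $X$. By Example \ref{Ex:2.1} (together with Remark \ref{Rm:2.1}) the pair $(X, X \xrightarrow{s} X/\sim)$ is a genuine object of \textbf{Strat}, and by the discussion preceding Proposition \ref{Prop:2.3} the underlying set $X$ carries the preorder $\leq$ (or $\leq_s$); these are exactly the inputs required below, and the same verifications go through verbatim for the more general stratification $(X, X \xrightarrow{i} X)$.

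For (i), I would first recall that the stratified geometric realisations $\parallel \Delta^n \parallel$ of the standard simplices assemble into a cosimplicial object $\parallel \Delta^{\bullet} \parallel : \Delta \to \textbf{Strat}$, the cofaces and codegeneracies being the stratified realisations of the usual maps of $\Delta$. Applying the contravariant hom-functor $\textbf{Strat}(-, X)$ then yields a functor $\Delta^{\mathrm{op}} \to \textbf{Set}$, i.e. precisely a simplicial set, whose $n$-simplices are $\textbf{Strat}(\parallel \Delta^n \parallel, X)$; the face and degeneracy maps are induced by precomposition with the cofaces and codegeneracies, and the simplicial identities follow formally from the cosimplicial ones. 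The only points to check are that $X$ is indeed an object of \textbf{Strat} (done above, cf. [14]) and that each $\textbf{Strat}(\parallel \Delta^n \parallel, X)$ is a set rather than a proper class, which is immediate since it is a subset of the set of continuous maps of the underlying topological spaces.

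For (ii) and (iii) I would build the prestream first and then pass to the associated d-space. For the prestream I set $\leq|_U := \leq \cap (U \times U)$ (respectively $\leq_s|_U$) for each $U$ open in the second topology, and verify the axioms of a precirculation: each $\leq|_U$ is a preorder because $\leq$ is, and the restriction compatibility for $V \subseteq U$ is automatic, since all the orders are restrictions of the single global preorder $\leq$. This exhibits $(X, \leq|_{\bullet})$ as a prestream. For the d-space I take $d^{\leq|_{\bullet}} X$ to be the set of paths $\gamma : [0,1] \to X$ that are directed maps from the directed interval into $(X, \leq|_{\bullet})$ --- equivalently, the locally $\leq$-monotone paths --- and check the d-space axioms: constant paths are directed by reflexivity, closure under monotone reparametrisation is clear, and closure under concatenation follows from transitivity of $\leq$ together with the local compatibility of the $\leq|_{\bullet}$. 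This is exactly the standard preorder-to-prestream-to-d-space passage, specialised to our $\leq$ (or $\leq_s$); see [13--16].

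The step I expect to be the main obstacle is the gluing behaviour of the precirculation in (ii): if one asked $(X, \leq|_{\bullet})$ to be a full \emph{stream} (circulation), one would need $\leq|_U$, for $U = \bigcup_i U_i$, to coincide with the preorder generated by the $\leq|_{U_i}$, and this can fail for an arbitrary global preorder. The point is that the statement claims only a \emph{prestream}, whose weaker axioms are satisfied automatically by restriction; I would therefore be careful to phrase the construction at the level of precirculations, and to note explicitly that the d-space in (iii) is then obtained from this prestream rather than from a stream structure.
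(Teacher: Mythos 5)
Your proposal is correct and follows essentially the same route as the paper: the paper's proof simply cites Definition 7.1.0.3 in [16] for (i), Construction 5.1.7 in [14] for (ii) (noting, exactly as you do, that the prestream is obtained by restricting the preorder to each open subset), and 5.1.11 in [14] for (iii). You have merely unpacked those cited constructions and verified their hypotheses explicitly --- including the apt observation that only the prestream (precirculation) axioms, not the stream gluing condition, are being claimed --- which is a faithful elaboration rather than a different argument.
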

\begin{proof}
For (i), see Definition 7.1.0.3 in [16]. For (ii), see 5.1.7 in [14] (this prestream is simply obtained by restriction on each open subset $U$ of $X$). For (iii), see 5.1.11 in [14].
\end{proof}
This allows us to study schemes and perfectoid spaces also from the points of view of simplicial sets, streams and d-spaces. A stream is usually defined to be a particular kind of prestream; see, for instance, Definition 5.1.14 in [14] for Haucourt streams and Remark 5.1.19 in the same paper for Krishnan streams. We will not go deeper into these topics here.
\subsection{Avoiding the dependence on the covering}
The results in this subsection can also be applied to Section 2.1. The idea is to define a direct limit in order to avoid the dependence on $(X_p)_p$ in the construction of a stratification. Given a scheme or a perfectoid space $X$, consider some open covering $(X_p)_p$. A \textit{refinement} of such a cover is another open cover $(Y_t)_t$ of $X$ such that:
$$ (X_p)_p \subseteq (Y_t)_t $$
This means that $(Y_t)_t$ contains all the affine schemes in the covering $(X_p)_p$, with some possible additional affine schemes. Of course, the same argument holds for perfectoid spaces and affinoid perfectoid spaces. The reason why we do this is due to $h$: more affine schemes in the covering can give, in general, more interesting posets $X/ \sim$.
\begin{remark}\label{Rm:2.2}
\normalfont As we had already noticed, we remark again the the term 'open' refers here to the topology defined on the scheme or perfectoid space $X$, and not to other topologies (that is, not to the second topology, which actually at this time has not been defined yet). 
\end{remark} 
Now consider for each open covering, say $(X^r _p)_p$, the corresponding poset $X/ \sim_r$. Since posets form a category, we can define a direct limit as follows. It is clear that the following implication holds:
$$ (X^r _p)_p \subseteq (X^t _p)_p \Rightarrow X/ \sim_r \subseteq X/ \sim_t$$
(where we consider the same representatives of $X/ \sim_r$ also on $X / \sim_t$, except when not possible, of course). Thus, the set of all the open covers (by affine schemes or affinoid perfectoid spaces) for $X$ is an index set and the family of all the corresponding posets is indexed by it. We consider the inclusion morphisms:
$$\iota: X/ \sim_r \rightarrow X/ \sim_t$$
for $(X^r _p)_p \subseteq (X^t _p)_p$. It is clear that all the necessary conditions are satisfied, and we can thus consider (when it exists) the following direct limit, which avoids the dependence on the chosen cover:
$$ \lim_{\longrightarrow_{(X^r _p)_p}} X/ \sim_r $$
If this limit exists, it will be called the 'refined corresponding poset of the scheme/perfectoid space', and we will usually consider it instead of any other covering.
\begin{remark}\label{Rm:2.3}
\normalfont Direct limits involving open covers are also used, for instance, when defining the "refined" \v{C}ech cohomology, that is, \v{C}ech cohomology that does not depend on the chosen cover. However, in our case we consider a different approach: for reasons due to the definition of $h$ and hence of $X/ \sim$, here it is more interesting to consider a refinement to be a covering with more elements than the previous one, and with at least all the previous affine schemes or affinoid perfectoid spaces. Instead, with \v{C}ech cohomology we consider a refinement to be a cover whose elements are subsets of some other elements in the other cover (see, for instance, Chapter 10 in [22]). The limit above may not exist, and in such cases we will unfortunately have a dependence on the chosen covering. However, we will see later that the definition of derivative actually depends on the considered stratification (hence, also on the chosen map $s$, not only on the poset $X/ \sim$): this can be seen similarly to the dependence on the chosen direction for directional derivatives. Thus, the choice of the covering will not cause problems, since the "stratified derivative" will always depend on a sort of "direction" (in this case, the stratification).\\
We also note that this definition of refinement does not lead, in general, to a "degenerate" poset, that is, a poset which turns out to actually be $X$ itself: indeed, since the notion of 'open' depends on the chosen topology, this could only happen with a discrete topology. We will use the degenerate case in the definition of derivative, because in such situations it turns out to be really useful: it allows us to obtain a map between schemes or perfectoid spaces from a stratified map.
\end{remark}
Now that we have prepared the groud for the application of the work of Ayala, Francis and Tanaka [3], we briefly review the fundamental part of their paper which allows us to finally define derivatives on schemes and perfectoid spaces (and actually, as previously remarked, also on any kind of space to which the arguments of this section can be applied).
\section{Conical derivation on schemes and perfectoid spaces}
We start this section recalling the notion of derivative for stratified spaces defined in [3]. We will then define a map that assigns to each $f:X_1 \rightarrow X_2$ (maps between schemes or perfectoid spaces) a function from the chosen stratification of $X_1$ and the chosen stratification of $X_2$ (again denoted by $f$), we will then extend this function and derive it, and we will finally define the derivative of $f:X_1 \rightarrow X_2$ using the above map.\\
Following Section 3.1 in [3], consider some compact \footnote{Notice that this is not so restrictive when considering our particular case of stratifications of schemes and perfectoid spaces. Indeed, by Definition \ref{Def:2.1} we know that we can choose any possible second topology on $X$, so we just need to consider one for which $X$ is compact. For example, the topology in Example 1.1 in [1] can be often used, because in many cases $X$ turns out to be compact.} stratified space $X$, and consider the stratified space $\mathbb{R}^i \times \mathtt{C}(X)$, where the cone $\mathtt{C}(X) \rightarrow \mathtt{C}(P)$ is defined as in Definition 2.1.14 in [3]:
\begin{equation}\label{Eq:3.1}
\mathtt{C}(X):= * \coprod_{\lbrace 0 \rbrace \times X} \mathbb{R}_{\geq 0} \times X
\end{equation}
and
\begin{equation}\label{Eq:3.2}
\mathtt{C}(P):= * \coprod_{\lbrace 0 \rbrace \times P} [1] \times P
\end{equation}
The space $\mathbb{R}^i \times \mathtt{C}(X)$, which will be indicated by $U$, is composed by points that will be denoted by $(x,[y,z])$, with $(x,y,z) \in \mathbb{R}^i \times \mathbb{R}_{\geq 0} \times X$ \footnote{If $X= \emptyset$, $[y,z]= *$.}. Thanks to the following identification, where $TM$ denotes the tangent bundle of the manifold $M$:
$$ T \mathbb{R}^i \times \mathtt{C}(X) \cong \mathbb{R}^i _v \times \mathbb{R}^i \times \mathtt{C}(X) = \mathbb{R}^i _v \times U $$
(where the points are indicated by $(v,x,[y,z])$), we have a homeomorphism $\gamma: \mathbb{R}_{>0} \times T \mathbb{R}^i \times \mathtt{C}(X) \rightarrow \mathbb{R}_{>0} \times T \mathbb{R}^i \times \mathtt{C}(X)$ given by:
$$ (a,v,x,[y,z]) \xmapsto{\gamma} (a,av+x,x,[ay,z]) $$
$\gamma$ can also be seen as a map $\gamma_{a,x}$, as explained at the beginning of Section 3.1 in [3].\\
Before proceeding, we need to define:
\begin{definition}\label{Def:3.1}
A continuous stratified map $f$ between two stratified spaces $(X, X \xrightarrow{s_1} P_1)$, $(Y, Y \xrightarrow{s_2} P_2)$ is a commutative diagram of this kind:
$$ X \quad \rightarrow \quad Y $$
$$ \downarrow \qquad \qquad \downarrow$$
$$P_1 \quad \rightarrow \quad P_2$$
\end{definition}
Now consider a continuous stratified map $f$ between two compact stratified spaces such that $\mathtt{C}(P_1) \rightarrow \mathtt{C}(P_2)$ sends the cone point to the cone point. The restriction to the cone point stratum is denoted by $f|_{\mathbb{R}^i}$. The map $f_{\Delta}$ (see Definition 3.1.2 in [3]) is then given by:
$$ f|_{\Delta}:=\id_{\mathbb{R}_{>0}} \times f|_{\mathbb{R}^i} \times f$$
The importance of the previous two maps can be seen in Example 3.1.3 of [3]: these maps allow us to recover the usual definition of derivative in a particular case. This suggests Definition 3.1.4, which is the notion of derivative for stratified spaces we were looking for:
\begin{definition}[Derivative of stratified maps]\label{Def:3.2}
Let $(X, X \xrightarrow{s_1} P_1)$, $(Y, Y \xrightarrow{s_2} P_2)$ be two compact stratified spaces and let $f$ be a continuous stratified map between $\mathbb{R}^i \times \mathtt{C}(X)$ and $\mathbb{R}^j \times \mathtt{C}(Y)$. $f$ is continuously derivable along $\mathbb{R}^i$ (or, equivalently, $f$ is $C^1$ along $\mathbb{R}^i$), if $\mathtt{C}(P_1) \rightarrow \mathtt{C}(P_2)$ sends the cone point to the cone point and if there is a continuous extension (which, if it exists, is unique) $\widetilde{D} f$:
$$ \mathbb{R}_{\geq 0} \times T \mathbb{R}^i \times \mathtt{C}(X) \xmapsto{\widetilde{D} f} \mathbb{R}_{\geq 0} \times T \mathbb{R}^j \times \mathtt{C}(Y)$$
$$\uparrow \qquad \qquad \qquad \qquad \uparrow$$
$$ \mathbb{R}_{> 0} \times T \mathbb{R}^i \times \mathtt{C}(X) \xmapsto{\gamma^{-1} \circ f_{\Delta} \circ \gamma} \mathbb{R}_{> 0} \times T \mathbb{R}^j \times \mathtt{C}(Y) $$
The restriction to $a=0$ is denoted by $D f$. $D_x f$ is defined as the composition of the projection map onto the second term and the map from $\mathbb{R}^i _v \times \lbrace x \rbrace \times \mathtt{C}(X)$ to $\mathbb{R}^j _w \times \lbrace f(x,*) \rbrace \times \mathtt{C}(Y)$. For $n>1$, a map is continuously derivable along $\mathbb{R}^i$ $n$ times (or, equivalently, it is $C^n$ along $\mathbb{R}^i$) if it is continuously derivable along $\mathbb{R}^i$ and if $D f$ is continuously derivable $n-1$ times along $\mathbb{R}^i \times \mathbb{R}^i$. If $f$ is continuously derivable $n$ times along $\mathbb{R}^i$, $\forall n$, then $f$ is $C^{\infty}$ along $\mathbb{R}^i$ (or, equivalently, it is conically smooth along $\mathbb{R}^i$).
\end{definition}
We can now finally apply the previous definitions to our case. We first need to assign to each map between schemes or perfectoid spaces a certain map between their chosen stratifications. Actually, we will do this in two steps: we first assign, in a uniquely determined way, a continuous stratified map to our map of schemes or perfectoid spaces; then, we assign in a certain way another continuous stratified map, we derive it and we transform it into a map between schemes or perfectoid spaces. We then conclude noting a useful generalisation, which can be regarded as the actual way to define the derivative. Start with a map $f:X_1 \rightarrow X_2$ between schemes or perfectoid spaces. By definition, we can take the map between the underlying sets, denoted again by $f$. Stratify in some way the two spaces, say with $s_1$, $s_2$, respectively. Then we can write the following diagram, where we should find a map $g$ for which it is commutative, i.e. $g \circ s_1 = s_2 \circ f$:
$$ X_1 \qquad  \xrightarrow{f} \qquad X_2 $$
$$ \downarrow s_1 \qquad \qquad \downarrow s_2$$
$$X_1/ \sim_1 \quad \xrightarrow{g} \quad X_2 / \sim_2 $$
We have already fixed, as usual, the representatives of the equivalence classes of $X_i/ \sim_i$. We would like to do the following:
$$g(s_1(x))=s_2(f(x)) \rightsquigarrow g(y)=s_2(f(s^{-1} _1 (y)))$$
The unique problem is that $s^{-1} _1 (y)$ is a set. Of course, if $f$ is constant on each of these sets, then everything works properly, but this is really restrictive, so we will need to do something more. The idea is to choose a representative for each $s^{-1} _1 (y)$, and to consider a new diagram. More precisely, choose one and only one representative for each $s^{-1} _1 (y)$ (notice that these sets form a partition of $X$), which will be denoted by $t_y$. Now let $R^{s_1} _{X_1}$ denote the subspace of all the chosen representatives $t_y$, for $y \in X_1/ \sim_1$ ($R$ stays for 'representatives'). We can endow this space with the subspace topology, so that the restrictions of the maps will still be continuous. We have thus arrived at the following diagram:
$$ R^{s_1} _{X_1} \quad \, \xrightarrow{f} \qquad X_2 $$
$$ \downarrow s_1 \qquad \qquad \downarrow s_2$$
$$X_1/ \sim_1 \quad \xrightarrow{g} \quad X_2 / \sim_2 $$
where $f$ and $s_1$ are actually restricted to their new domain. Now, we can define $g$ uniquely (up to the choice of the representatives) as follows:
\begin{equation}\label{Eq:3.3}
g:=s_2 \circ f \circ s^{-1} _1
\end{equation}
which clearly assures the commutativity of the new diagram. Notice that this can be certainly done because $s_1$ is a bijection between $X_1/ \sim_1$ and $R^{s_1} _{X_1}$. Therefore, this function is a continuous stratified map associated to $f$. By Definition \ref{Def:3.2}, we need to "extend" this map to another domain. We will do this in the following way:\\
1) We consider $i=j$;\\
2) The extension of the continuous stratified map obtained before is the "obvious one", that is, the component in the cone of $R^{s_1} _{X_1}$ (respectively, $X_2$) is sent to the component of the cone of $X_1/ \sim_1$ (respectively, $X_2/ \sim_2$) as described in [3] (recall that the cone is a stratified space); the first component is simply $w$; the function on the second component (which comes from \eqref{Eq:3.1}), that is, the first component of the cone, is simply $a$ (with $a \geq 0$). Moreover, the map between the cones of the posets is essentialy the same as before, with the addition that the cone point is sent to the other cone point. Finally, the extension of $f$ is similar to the first ones above, i.e. the first two components are as above, while the third component is the obvious one \footnote{For this last extension, recall the notion of 'cone functor', which is the functor $\mathtt{C}:\textbf{Top} \rightarrow \textbf{Top}$ given by $\mathtt{C} f : \mathtt{C}(X) \rightarrow \mathtt{C}(Y)$, which is defined for continuous maps $f: X \rightarrow Y$ by $\mathtt{C} f ([x,y]):=[f(x),y]$.}.\\
This gives us another continuous stratified map, say $\widehat{f}$, as requested by Definition \ref{Def:3.2}. Indeed, it is not difficult to see that the diagram obtained is still commutative. We then consider, if it exists, the derivative $\widetilde{D} \widehat{f}$ (or its restriction to $a=0$, denoted by $D \widehat{f}$).
\begin{remark}\label{Rm:3.1}
\normalfont Since the definition of derivative also involves $\mathbb{R}_{\geq 0}$, $\mathbb{R}_{> 0}$ and $\mathbb{R}^i$, we will also have some functions on the corresponding components, as noted above. These can be, for instance, simply $w$ and $a$, as in the previous definition. However, there are also other cases, as we will see in an example below. This last possibility is the "generalisation" we were talking about: we allow any possible kind of extensions, but of course we will need to specify this dependence when dealing with derivatives obtained this way.
\end{remark}
We now need to assign to this map a function between schemes or perfectoid spaces. The standard way to do this is to consider everything discrete: in order to have a general way that always works, we endow the domain and the codomain of the derivative with the discrete topology. Then, both the domain and the codomain can be seen as schemes or perfectoid spaces. We will prove this for schemes; a similar argument can be used with perfectoid spaces. First of all, recall that any topological space can be given the structure of a locally ringed space; to do this, consider the sheaf of continuous real-valued functions on each open subset. In order to have a scheme, we need an open covering by affine schemes. We will show that each singleton $\lbrace x \rbrace$, which is open, is an affine scheme. This will imply the statement above. We will prove that each $\lbrace x \rbrace$ is isomorphic, as a locally ringed space, to $\Spec(\mathbb{R})$. Since $\mathbb{R}$ is not only a ring, but also a field, $\Spec(\mathbb{R})$ is a singleton (its only prime ideal is $\lbrace 0 \rbrace$, which is in fact its only element), and moreover we have:
$$\mathcal{O}_{\Spec(\mathbb{R})} (\emptyset)= \lbrace 0 \rbrace$$
$$\mathcal{O}_{\Spec(\mathbb{R})} (\lbrace 0 \rbrace)= \mathbb{R}$$
$\lbrace x \rbrace$ is a ring (whose additive and multiplicative identity coincide, and are $=x$) which is clearly isomorphic to $\Spec(\mathbb{R})$, since they are both singletons. It remains to find isomorphisms:
$$\phi_U : \mathcal{O}_{\Spec(\mathbb{R})}(U) \rightarrow \mathcal{O}_{\lbrace x \rbrace} (\rho^{-1} (U))$$
for each open $U$ in $\Spec(\mathbb{R})$, and where $\rho(x)= \lbrace 0 \rbrace$. When $U= \emptyset$, we clearly have the zero isomorphism between the two singleton rings. When $U= \lbrace 0 \rbrace$, we can take the isomorphism sending each $r \in \mathbb{R}$ to itself. The commutativity property required by the definition of morphism of locally ringed spaces is obviously satisfied, because of the trivial morphisms involved. Thus, $\lbrace x \rbrace$ is isomorphic to $\Spec(\mathbb{R})$ and the domain and codomain of the derivative can be seen as schemes. We define the $n$-th derivative of $f$ as the $(n-1)$-th derivative of $D f$ (viewed as a map between schemes/perfectoid spaces). A map that can be derived infinitely many times is called conically smooth along $\mathbb{R}^i$.
\begin{remark}\label{Rm:3.2}
\normalfont It is important to note that the $n$-th derivative of the stratified map obtained from some $f$ as above is, in general, different from the $(n-1)$-th derivative of the map $D f$ viewed as a map between stratified spaces.
\end{remark}
We conclude this section with some examples.
\begin{example}\label{Ex:3.1}
\normalfont Consider some scheme or perfectoid space $X$, and let $X/ \sim$ be its corresponding poset. Stratify this space via the standard $s$ (see Example \ref{Ex:2.1}; here we assume that we are dealing with some covering for which $s$ is continuous, as in such example), choose as representatives $t_y$ for the sets $s^{-1}(y)$ precisely the same as the representatives in $X/ \sim$, so that $R^s _X$ is the same set as $X/ \sim$, even though these spaces are endowed with different topologies ($R^s _X$ has the subspace topology, while $X/ \sim$ has Alexandroff topology). Consider any map $f:X \rightarrow R^s _X$ (where the codomain is endowed with the trivial topology, we consider the trivial covering via all its singletons and we stratify it via the identity) defined as follows:
$$f(t_y):= t_y$$
while $f$ can be defined in any way for the other points in $X$. Then, in order to obtain a continuous stratified map, we need to find a function $g$ for which the following diagram commutes:
$$R^s _X \quad \, \xrightarrow{f} \quad R^s _X$$
$$\qquad \downarrow s \qquad \qquad \downarrow s_1=i$$
$$\, X/ \sim \, \, \quad \xrightarrow{g} \, \quad X/ \sim$$
Indeed, the \textit{set} $R^s _X / \sim_1$ is the same as the \textit{set} $X/ \sim$. This is because the definition of $\sim_1$ together with the trivial covering of $R^s _X$ imply that $R^s _X / \sim_1$ is equal to $R^s _X$, which is thus equal (as a set) to $X/ \sim$, as we noted above.\\
Via \eqref{Eq:3.3}, we conclude that $g$ is the identity map. This statement follows from the fact that the restriction of $s$ to $R^s _X$ is clearly the identity, and the same holds true for $f$. Thus, we have:
$$R^s _X \quad \, \xrightarrow{i} \quad R^s _X$$
$$\downarrow i \qquad \quad \downarrow i$$
$$X/ \sim \, \, \xrightarrow{g} \, X/ \sim$$
from which we clearly have $g=i$, where $i$ denotes the identity. Consequently, extending the identity map in the obvious way (as previously outlined), we obtain again the identity, whose derivative (by Example 3.1.7 in [3]) is the identity map (which can be easily turned into a map between schemes or perfectoid spaces).
\end{example}
\begin{example}\label{Ex:3.2}
\normalfont For the meaning of 'extension' in this example, see also Remark \ref{Rm:3.1}. Making use again of Example 3.1.7 in [3], any map between schemes or perfectoid spaces which is extended, when defining the derivative, to something of the form 
$$\widehat{f}(x,y,z)=(k(x),y,\rho_y (z))$$
has the following expression for the derivative at $a=0$:
$$ D \widehat{f}(1,v,x,[y,z]) = (1,Dk_x (v), k(x), [x,\rho_0(z)]) $$
where $v$ is the same as $v$ in the identification after equation \eqref{Eq:3.2}. As usual, this map can be turned into a map between schemes or perfectoid spaces.
\end{example}
\begin{remark}\label{Rm:3.3}
\normalfont We remark that here we have considered only one possible kind of conical derivative. It is clear that, if we find other ways to assign a continuous stratified map to our $f$, then we can define the derivative analogously to what we did above. It seems that our map is quite natural, even though we often need to restrict the domain. It would be interesting to find some conditions which assure that $g$ actually exists without using such restriction, and then define the derivative in those cases (notice that, for instance, with the identity there would be no need to restrict the domain: $g=i$ would work properly, yelding as derivative the identity map, which in fact is conically smooth along $\mathbb{R}^i$). Another natural kind of derivative is, for example, the following one. Consider the stratification $(X_1, X_1 \xrightarrow{i} X_1)$, with the partial order obtained from the specialisation preorder on $X$; we have the diagram:
$$ X_1 \qquad  \xrightarrow{f} \qquad X_2 $$
$$ \downarrow i \qquad \qquad \qquad \downarrow s_2$$
$$X_1 \qquad \xrightarrow{g} \quad X_2 / \sim_2 $$
which is certainly commutative if we define
$$g:= s_2 \circ f$$
without using any restriction. Then we can proceed as above and evaluate $D f$. Notice that we do not have a dependence on the first stratification here, because it has been fixed.\\
Furthermore, there is also another way (which in some cases is actually better than the discrete one) to endow the derivatives with the structure of maps between schemes or perfectoid spaces. If we view the space $X$ as a subset of the domain and codomain of the derivative via isomorphism, then $X$ is disjoint from the difference of these spaces and $X$ itself, and we can use the extension topology (see [31] or Section 4 in [1]) to endow them with a natural extension of the first and second topologies on $X$. The problem is then that the new space obtained should be a scheme or a perfectoid space, and this is not guaranteed.
\end{remark}
\section{Conical vector fields and conical differential forms}
We now discuss conical vector fields and conical differential forms. The definitions needed for this section are quite involved, so to refer to [3] (and actually also to [4-5] and [18]) instead of writing them here. A stratification of a scheme or perfectoid space is called $C^0$ if the stratified space is $C^0$. It may be helpful to redefine the second topology on $X$ so that it becomes paracompact, in case it were not. The definition of conically smooth stratification is analogous. A conical vector field on some conically smooth stratification of a scheme or perfectoid space $X$ is an element of the vector space $\Theta(X)$ of parallel vector fields defined in Definition 8.1.2 of [3]. Flows of conical vector fields are defined below it. The idea of the definition of conical differential form is the same as the usual one (see, for instance, [23]). Since $\Theta(X)$ is a vector space, we can consider its dual, that is, the space $\Theta^* (X)$ consisting of conical covector fields. As covector fields are also called differential forms, conical covector fields are called conical differential forms. We define conical $k$-forms on $X$ as the elements of the space:
$$ (\bigwedge^k \Theta(X))^* $$
A notion of wedge product for conical forms can be defined as usual. We would also like to define a sort of 'exterior derivative'. For conical $1$-forms, this is not a problem: by Section 20 in [23], we can \textit{define} it via a formula that holds for the usual differential $1$-forms
\begin{equation}\label{Eq:4.1}
d \omega (X_1,X_2):= X_1 \omega(X_2) - X_2 \omega (X_1) - \omega([X_1,X_2])
\end{equation}
where the Lie bracket can be defined in the usual way. For $n>1$, we recall the following formula for the Lie derivative of differential forms, which will be used below:
$$ \mathcal{L}_X \omega(X_1, ..., X_n)=X(\omega(X_1,...,X_n)) - \sum_{i=1}^{n} \omega(X_1,...,[X,X_i],...,X_n) $$
Moreover, we have Cartan formula:
$$ \mathcal{L}_X \omega = i_X (d \omega) + d(i_X \omega) $$
If we define the interior product as usual, we can thus define the exterior derivative of conical $n$-forms, for $n>1$, as follows:
\begin{multline}\label{Eq:4.2}
d(i_X (\omega(X_1,...,X_n))):= X(\omega(X_1,...,X_n)) - \sum_{i=1}^{n} \omega(X_1,...,[X,X_i],...,X_n) \\
- i_X (d (\omega(X_1,...,X_n)))
\end{multline}
With $X$ variable in the space of vector fields, we obtain the desired definition inductively (recall that $n=1$ has already been defined). Some natural questions arise, and these stimulate further research:
\begin{question}\label{Qst:4.1}
\normalfont Is this the "best" definition possible?
\end{question}
This question may actually depend also on the following one:
\begin{question}\label{Qst:4.2}
\normalfont Is it possible to find local expressions of differential forms, as for manifolds?
\end{question}
Relatively to Question \ref{Qst:4.2}, we notice that it could be possible to locally define the exterior derivative via such local expressions. Moreover, a possible way to answer this question could be found considering the strata of parallel vector fields and applying the theory developed in [3-5] and [18].\\
We conclude this section noting that the above definition of exterior derivative allows us to define a new de Rham complex, which will be called 'conical de Rham complex':
\begin{theorem}[Conical de Rham complex]\label{Thm:4.1}
Analogously to the classical case, we can define a 'conical de Rham complex' via the above definition of exterior derivative. More precisely, we have:
\begin{equation}\label{Eq:4.3}
0 \rightarrow \Theta (X)^* \xrightarrow{d} {\Theta^2}(X)^* \xrightarrow{d} {\Theta^3}(X)^* \xrightarrow{d} ...
\end{equation}
where the spaces involved are the vector spaces
$$ {\Theta^n}(X)^* := (\bigwedge^n \Theta(X))^* $$
and where the cohomology groups are given by the quotients of closed forms by exact forms (as usual).
\end{theorem}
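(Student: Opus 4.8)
The plan is to observe that the substance of the statement is the assertion that \eqref{Eq:4.3} is a genuine cochain complex, i.e. that the composite $d \circ d$ vanishes on each $\Theta^n(X)^*$; once this is established, the cohomology groups ``closed forms modulo exact forms'' are automatically well defined and the construction proceeds exactly as in the classical theory of [23]. Thus the proof splits into two tasks: first, checking that $d$ is well defined as a map $\Theta^n(X)^* \to \Theta^{n+1}(X)^*$, that is, that the right-hand sides of \eqref{Eq:4.1} and \eqref{Eq:4.2} really determine alternating multilinear functionals on $\Theta(X)$; and second, proving $d \circ d = 0$.

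First I would make explicit the algebraic data that the formulas \eqref{Eq:4.1}--\eqref{Eq:4.2} tacitly require. Since $\Theta(X)$ is the vector space of parallel vector fields of Definition 8.1.2 in [3], and the Lie bracket is defined ``in the usual way'' as stated above \eqref{Eq:4.1}, I would verify that $\Theta(X)$ is closed under $[\cdot,\cdot]$, that this bracket is bilinear, antisymmetric and satisfies the Jacobi identity, and that each $X \in \Theta(X)$ acts as a derivation on the scalars $\omega(X_1,\dots,X_n)$ with $[X,Y]f = X(Yf) - Y(Xf)$. With this in hand I would show that the inductive prescription \eqref{Eq:4.1}--\eqref{Eq:4.2}, which defines $d\theta$ through its contractions by means of Cartan's formula $\mathcal{L}_X = i_X d + d\,i_X$, coincides with the intrinsic (Koszul) formula
\begin{equation*}
d\omega(X_0,\dots,X_k) = \sum_{i=0}^{k} (-1)^i X_i\,\omega(X_0,\dots,\widehat{X_i},\dots,X_k) + \sum_{i<j} (-1)^{i+j}\omega\bigl([X_i,X_j],X_0,\dots,\widehat{X_i},\dots,\widehat{X_j},\dots,X_k\bigr),
\end{equation*}
from which alternation and multilinearity are immediate. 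This settles the well-definedness task and, incidentally, shows that the contraction relation $i_X\,d\theta = \mathcal{L}_X\theta - d\,i_X\theta$ built into \eqref{Eq:4.2} is consistent.

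For the vanishing I would argue directly from the Koszul formula rather than through Cartan's calculus, since the latter yields only the tautology $i_X d^2\theta = [\mathcal{L}_X,d]\theta$ (the relation $[\mathcal{L}_X,d]=0$ being itself equivalent to $d^2=0$) and cannot by itself close the argument. Expanding $d(d\omega)$ and collecting terms, the second-order contributions $X_i X_j\,\omega(\dots)$ cancel in antisymmetric pairs by the derivation property, while the iterated-bracket contributions cancel precisely by the Jacobi identity $[[X_i,X_j],X_\ell]+[[X_j,X_\ell],X_i]+[[X_\ell,X_i],X_j]=0$; the mixed terms cancel by the antisymmetry of the bracket. The first instance $\Theta(X)^* \xrightarrow{d} \Theta^2(X)^* \xrightarrow{d} \Theta^3(X)^*$ is exactly this computation applied to a conical $1$-form, and the general case follows by the same bookkeeping (which may equivalently be organized by induction on degree via $\mathcal{L}_X = i_X d + d\, i_X$ once the Koszul identity above is available). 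Hence $d^2 = 0$ and \eqref{Eq:4.3} is a complex.

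The main obstacle is not the formal homological algebra, which mirrors the classical case verbatim, but the verification of these algebraic prerequisites in the conical/stratified setting. The delicate points are that the conically smooth parallel vector fields of [3] genuinely form a Lie algebra --- closure under the bracket and the Jacobi identity are nontrivial here, since the bracket of two fields that are parallel along $\mathbb{R}^i$ must again be parallel --- and that $\Theta^n(X)^* = (\bigwedge^n \Theta(X))^*$ is really the space of alternating $n$-linear functionals over the appropriate ring of conical functions, so that the intrinsic formula above type-checks and the scalars $\omega(X_1,\dots,X_n)$ are objects on which vector fields act as derivations. Once these structural facts are secured on the strength of [3] (and [4,5,18]), the identity $d^2 = 0$ is formal, the cohomology quotients of \eqref{Eq:4.3} are well defined, and the conical de Rham complex is established.
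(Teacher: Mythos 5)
Your proposal is correct, and in substance it supplies the ``long calculation'' that the paper's proof merely asserts: the paper's entire argument is the claim that $d \circ d = 0$ ``can be easily proved starting from the definition above,'' after which the complex and its cohomology are treated as formal consequences. The route you take, however, has genuine content beyond the paper's. You first identify the inductive prescription \eqref{Eq:4.1}--\eqref{Eq:4.2} with the intrinsic Koszul (Chevalley--Eilenberg) formula and then obtain $d^2=0$ from the derivation property, antisymmetry of the bracket, and the Jacobi identity; this is the correct way to organize the computation. More importantly, your observation that one cannot remain inside Cartan's calculus is a point the paper misses entirely: since the inductive definition yields only $i_X d^2\omega = [\mathcal{L}_X,d]\,\omega$, and $[\mathcal{L}_X,d]=0$ is, in this setting, equivalent to $d^2=0$, a ``direct calculation from the definition'' is circular unless the Koszul formula (or an equivalent independent input) is established first --- so the paper's stated proof strategy is not self-contained as written. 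Finally, the prerequisites you flag are genuine gaps in the construction rather than pedantry: the paper never checks that $\Theta(X)$ is closed under the bracket and satisfies Jacobi (it only says the bracket ``can be defined in the usual way''), and its literal definition of conical $k$-forms as elements of the $\mathbb{R}$-linear dual $(\bigwedge^k \Theta(X))^*$ makes each value $\omega(X_1,\dots,X_n)$ a constant scalar, so the terms $X(\omega(\cdots))$ in \eqref{Eq:4.1}--\eqref{Eq:4.2} are zero or ill-typed unless forms are reinterpreted as multilinear functionals over a suitable ring of conical functions, exactly as you require. Neither your proposal nor the paper discharges these structural facts --- you defer them to [3], the paper is silent --- so the two proofs are conditional to the same degree; yours has the advantage of making both the conditions and the actual mechanism of cancellation explicit.
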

\begin{proof}
The fact that $d \circ d = 0$ can be easily proved starting from the definition above (it is only a long calculation). By this fact, it clearly follows that exact forms are closed (where these notions are defined similarly to the classical case). Then, we clearly have a complex which is analogous to de Rham complex, but involving conical forms.
\end{proof}
An important question naturally arises:
\begin{question}\label{Qst:4.3}
\normalfont Is there any connection between conical de Rham complex and K\"ahler-de Rham complex?
\end{question}
This could also give an answer to Question \ref{Qst:4.1}.
\section{A comparison between the two approaches}
We now briefly compare the usual method with K\"ahler differentials and the conical approach. It is clear that K\"ahler approach is a bit involved: it uses various concepts, while the definition of derivative along $\mathbb{R}^i$ is easier to state. Furthermore, the module of $R$-derivations is in general \footnote{There are some exceptions: for instance, when we consider polynomials (see [20] for some examples) we usually have simple calculations, as in classical calculus. Of course, the difficulty arises in more general cases. In such situations, the conical approach can be easier.} more difficult to compute (and this is one of the reasons why we consider K\"ahler differential forms, see also [20]), while the extension in Definition \ref{Def:3.2} can be simpler, since it is only \textit{one kind of} derivative of a function, and not the module of all the possible derivations \footnote{Even for higher order derivations (see, for instance, [29-30]) we have some structures, usually groups or algebras, which may be difficult to compute. Again, in these situations the conical approach is often easier, if we only want to evaluate derivatives.}. In fact, the introduction of derivations was due to the problem of finding a definition of derivative in commutative algebra: we then consider all the linear maps satisfying some classical properties of the derivative in real and complex calculus. Here, instead, we have a method to obtain a certain kind of derivative (which also generalises the usual one), and not all the possible derivations. We can therefore see these two approaches as two different ways of generalising derivatives, similarly, for instance, to the the various kinds of fractional derivatives which do not satisfy product rule (but which were born for some particular motivations); these are also accompanied by other notions of fractional derivatives which satisfy it (notably, conformable fractional derivative, which has been used in many situations since its introduction).\\
When it comes to the definition of conically smooth spaces and conical vector fields, even the conical approach becomes more involved. We think that, at least for what is known now, the conclusion is: whenever we want to consider only derivatives, the conical approach is usually simpler, and thus often the best possible. However, when it comes to differential forms, both conical and K\"ahler approaches give interesting results, and they both end up with two kinds of de Rham complexes. Hence, in this situation, we cannot say (at this time) what could be more useful, and in which cases. To answer this, further research is needed.\\
Summing up, we can say that these two approaches are different from the beginning: in one case, we consider all the possible derivations (and the module, or group/algebra, is in general difficult to evaluate), while in the other case we find an interesting and precise definition of derivative, which was born because of some motivations, and which is easier to compute. Furthermore, when we consider differential forms, both methods give rise to interesting results, and it is not known yet when it is better to use one method or the other one.
\section{Conclusion}
In this paper we have shown a conical approach to calculus on schemes and perfectoid spaces. Actually, the stratification method can also lead to other interesting results, not necessarily related to calculus (for some examples, see Section 2.1). Building upon [3], we have defined a notion of derivative which is simpler than the one involving the computation of a module, or a group/algebra in the higher order case, of all the possible derivations. Moreover, when we consider differential forms, both K\"ahler and conical approach give rise to interesting results, notably two kinds of de Rham complexes. Conical calculus thus turns out to be a really interesting and useful \textit{addition} to the usual K\"ahler method. Some directions for future works are given in Section 4.\\
\\
\begin{large}
\textbf{References}
\end{large}
\\
$[1]$ Norman, M. (2020). On structured spaces and their properties. Preprint (arXiv:2003.09240)\\
$[2]$ Norman, M. (2020). (Co)homology theories for structured spaces arising from their corresponding poset. In preparation\\
$[3]$ Ayala, D.; Francis, J; Tanaka, H. L. (2017). Local structures on stratified spaces. Advances in Mathematics, Volume 307, 903-1028\\
$[4]$ Ayala, D.; Francis, J.; Rozenblyum, N. (2018). Factorization homology I: Higher categories. Advances in Mathematics, Volume 333, 1042-1177\\
$[5]$  Ayala, D.; Francis, J.; Rozenblyum, N. (2019). A stratified homotopy hypothesis. J. Eur. Math. Soc. 21, 1071-1178\\
$[6]$ Grothendieck, A.; Dieudonn\'e, J. (1971). \'El\'ements de g\'eom\'etrie alg\'ebrique: I. Le langage des sch\'emas. Grundlehren der Mathematischen Wissenschaften (in French). 166 (2nd ed.). Berlin; New York: Springer-Verlag\\
$[7]$ Leytem, A. (2012). An introduction to Schemes and Moduli Spaces in Geometry. Master Thesis, University of Luxembourg\\
$[8]$ Hartshorne, R. (1977). Algebraic geometry. Graduate Texts in Mathematics, volume 52, Springer, Springer-Verlag New York\\
$[9]$ Scholze, P. (2012). Perfectoid spaces. Publ. Math. Inst. Hautes Études Sci. 116: 245-313\\
$[10]$ Bhatt, B. (2017). Lecture notes for a class on perfectoid spaces. Lecture notes\\
$[11]$ Cais, B; Bhatt, B; Caraiani, A.; Kedlaya, K. S.; Scholze, P.; Weinstein, J. (2019). Perfectoid Spaces: Lectures from the 2017 Arizona Winter School. Mathematical Surveys and Monographs, Volume 242. American Mathematical Soc.\\
$[12]$ Scholze, P. (2014). Perfectoid spaces and their Applications. Proceedings of the International Congress of Mathematicians-Seoul 2014 II, 461-486, Kyung Moon Sa, Seoul\\
$[13]$ Yokura, S. (2017). Decompostion spaces and poset-stratified spaces. Preprint (arXiv:1912.00339)\\
$[14]$ Nicotra, S. (2020). A convenient category of locally stratified spaces. PhD thesis, University of Liverpool\\
$[15]$ Krishnan, S. (2009). A convenient category of locally preordered spaces. Applied Categorical Structures 17.5, 445-466\\
$[16]$ Nand-Lal, S. J. (2019). A simplicial approach to stratified homotopy theory. PhD thesis, University of Liverpool\\
$[17]$ Kelly, J. C. (1963). Bitopological spaces. Proc. London Math. Soc., 13(3), 71-89\\
$[18]$ Ayala, D.; Francis, J.;  Tanaka, H. L. (2017). Factorization homology of stratified spaces. Sel. Math. New Ser. 23, 293-362\\
$[19]$ Scholze, P. (2013). Perfectoid spaces: a survey. Current developments in mathematics 2012, Int. Press, Somerville, MA, 193-227\\
$[20]$ Fonseca, T. J. (2019). Calculus on schemes - Lecture 1. Lecture notes, University of Oxford\\
$[21]$ Johnson, J. (1969). K\"ahler differentials and differential algebra. Annals of Mathematics, 89 (1): 92-98\\
$[22]$ Gallier, J.; Quaintance, J. (2019). A Gentle Introduction to Homology, Cohomology, and
Sheaf Cohomology, University of Pennsylvania\\
$[23]$ Tu, L. W. (2008). An Introduction to Manifolds, 2nd edition. Springer\\
$[24]$ Grothendieck, A. (1966). On the de rham cohomology of algebraic varieties. Publications Math\'ematiques de L'Institut des Hautes Scientifiques 29, 95-103\\
$[25]$ Grothendieck, A. (1968). Crystals and the De Rham cohomology of schemes. In: Dix Expos\'es sur la Cohomologie des Sch\'emas. North-Holland, 306-358\\
$[26]$ Arapura, D.; Kang, Su-Jeong. (2011). K\"ahler-de Rham cohomology and Chern classes. Comm. Algebra, 39(4): 1153-1167\\
$[27]$ Hartshorne, R. (1975). On the De Rham cohomology of algebraic varieties. Inst. Hautes Etudes Sci. Publ. Math., (45): 5-99\\
$[28]$ Fu, G.; Hal\'as, M.; Li, Z. (2011). Some remarks on K\"ahler differentials and ordinary differentials in nonlinear control systems. Systems and Control Letters, 60: 699-703\\
$[29]$ Vojta, P. (2007). Jets via Hasse-Schmidt derivations. Diophantine geometry, volume 4 of CRM Series, 335-361. Ed. Norm., Pisa\\
$[30]$ Gatto, L.; Salehyan, P. (2016). Hasse-Schmidt derivations on Grassmann Algebras, with applications
to Vertex Operators. IMPA Springer Monographs, no. 4\\
$[31]$ Steen, L. A.; Seebach, J. A. Jr. (1995). Counterexamples in Topology. Dover reprint of 1978 ed., Berlin, New York: Springer-Verlag

\end{document}